\theoremstyle{plain}
\newtheorem{theorem}{Theorem}[section]
\newtheorem{proposition}[theorem]{Proposition}
\newtheorem{lemma}[theorem]{Lemma}
\newtheorem{corollary}[theorem]{Corollary}
\newtheorem{maintheorem}{Theorem}
\newtheorem{maincoro}[maintheorem]{Corollary}
\newtheorem{definition}[theorem]{Definition}
\newtheorem{question}{Question}
\newtheorem{claim}{Claim}
\theoremstyle{definition}
\newtheorem{remark}{Remark}
\newcommand{\field}[1]{\mathbb{#1}}
\newcommand{\ZZ}{\field{Z}}
\newcommand{\NN}{\field{N}}
\newcommand{\FF}{\field{F}}
\def\N{\NN}
\def\ninf{n\to+\8}
\def\wh{\widehat}
\newcommand{\al} {\alpha}       
\newcommand{\be} {\beta}
\def\eps{\varepsilon}
\newcommand{\ka} {\kappa}
\newcommand{\om} {\omega}
\def\s{\sigma}
\renewcommand{\S}{\Sigma}
\def\disp{\displaystyle}
\def\FFF{\FF_{2}^{+}}
\def\8{\infty}
\def\wt{\widetilde}
\newcommand{\ol}[1]{\overline{#1}}
\newcommand {\CA}{{\mathcal A}}
\newcommand {\CF}{{\mathcal F}}
\newcommand {\CT}{{\mathcal T}}
\newcommand {\CX}{{\mathcal X}}
\newcommand {\CY}{{\mathcal Y}}
\newcommand {\GA}{{\mathfrak A}}
\newcommand {\GB}{{\mathfrak B}}
\newcommand {\GI}{{\mathfrak I}}
\newcommand {\GJ}{{\mathfrak J}}
\newcommand {\GK}{{\mathfrak K}}
\newcommand {\GL}{{\mathfrak L}}
\def\ST{\CA^{\FF_2^+}}
\newcommand{\comment}[1]{}
\newcommand{\tritree}[3]{
\begin{forest}
[{#1}[{#2}][{#3}]]
\end{forest}
}
\newcommand{\heptatree}[7]{
\begin{forest}
[{#1}
[{#2}[{#4}][{#5}]]
[{#3}[{#6}][{#7}]]
]
\end{forest}
}
\begin{document}

\title{The Jacaranda tree is strongly aperiodic and  has zero entropy}


\author{A. Baraviera}
\thanks{AB and RL want to thank CIRM for kind support for a recherche en binôme in October 2022.}
\address{Instituto de Matemática e Estatística - UFRGS\\
Avenida Bento Gonçalves, 9500  - Porto Alegre - RS - Brasil\\
CEP 91509-900}
\email{baravi@mat.ufrgs.br}

\author{Renaud Leplaideur}
\address{ISEA, Universit\'e de la Nouvelle-Cal\'edonie \& LMBA UMR6205}
\email{renaud.leplaideur@unc.nc}

\date{\today}

\begin{abstract}
We prove that the Jacaranda tree obtained as a fixed point for a substreetution in previous work of the authors is strongly aperiodic and that the number of patches increases linearly with respect to the size of the patch. As a consequence we get that the tree has zero entropy. 
 \end{abstract}

\maketitle

\section{Introduction}
\subsection{The Jacaranda tree and the substreetutions}
In the present paper, we put a step forward the ergodic study of the Jacaranda tree which is obtained from a special example of substreetution as defined by authors in \cite{bara-lep1}.

Substreetutions are substitutions acting on the set of colored binary trees  $\{0,1\}^{\FFF}$, where $\FFF$ is the free semi-group with two generators $a$ and $b$. They extend to $\{0,1\}^{\FFF}$ classical objects as the Thue-Morse and the Fibonacci substitutions on $\{0,1\}^{\N}$. 

In  \cite{bara-lep1} it was proved that the closure of $\FFF$-orbit of the fixed point for some special substreetution $H$, $X:=\overline{\{T_{\om}(\GJ),\ \om\in \FFF\}}$, is minimal and non-periodic, in the sense that it is not reduced to a periodic orbit. In other words, and following the terminology, it was proved that $X$ is \emph{weakly aperiodic}. 
We remind that the natural action of $\FFF$ is given by the two maps $T_{a}$ and $T_{b}$ which respectively send a binary (colored) tree $\GA$ on its $a$-follower or $b$-follower.

There are several motivations to study substitutions on $\{0,1\}^{\FFF}$. The principal one is a long work in progress that aims to export the thermodynamic formalism via transfer operator to higher dimensional group (or semi-group) action. One reason for that is to continue to better understand similitudes and differences between  Ergodic or Statistical Mechanics viewpoints for Thermodynamic formalism. Statistical Mechanics viewpoint usually deals with $\ZZ^{d}$-action or even $\FFF$-actions (see \cite{ising, Rozikov}). 

Ergodic viewpoint deals with transfer operators. For $\ZZ$-actions it links the thermodynamic quantities (such as pressure, Gibbs measures, etc) to the spectral properties of that operator. This has never be done for $\ZZ^{d}$-actions (with $d\ge 2$) and, this is probably due to the existence of  the ``natural'' orientation in $\ZZ$ which is the key point to define the transfer operator. This natural direction fails to exist in $\ZZ^{d}$. For that reason authors were naturally led to study $\FFF$-actions. Other works related to phase transitions  and quasi-periodic systems (see \cite{BLL, BL1,BL2}) also led authors to prospect for substitutions adapted to $\FFF$-actions, since their attractor are example of quasi-periodic systems.
 
Along the way, many other interesting questions arose, as by-products of the initial goal. 
For instance, a notion of Sturmian trees has been introduced and studied (see {\it e.g.} \cite{KLLS}). We remind that for the Fibonacci substitution the attractor is a Sturmian shift. It was thus quite natural to inquire to get example of such trees generated by substreetutions. 

The question of entropy is also of prime importance. Entropy measures the complexity of the system. For trees, the first natural question is to define the right normalization, since the increase is expected to be super exponential. Several notions of entropies with different normalizations have been defined  in the literature (see \cite{bufetov,Petersen-Salama-20,Ban-Chang}). 
We also point out that for usual $\N$-actions, entropy is also related to the number of preimages, and this quantity is crucial to properly define the transfer operator. 

\medskip
In the present paper we  continue the work in progress, advancing in the study of topological properties of the
free semi-group action on the  Jacaranda tree. We prove that the stabilizer is reduced to the empty word $\epsilon$, which means that $X$ is \emph{strongly aperiodic}. Concerning complexity, we prove that the number of patches essentially increases linearly with respect to the length. This is one path in the good direction to check if $\GJ$ is Sturmian. As a by-product, we get that $X$ has zero entropy whatever the definition we take.

\subsection{Main results}

\subsubsection{Aperiodicity}

We remind that for some group $G$ acting on some space $\CX$, $g\in G$ is a stabilizer for $x\in \CX$ if $g.x=x$. 

\begin{definition}
\label{def-perio}
We say that a $\FFF$-invariant set  is strongly periodic if it is finite.
We say that a $\FFF$-invariant set is weakly periodic if the set of stabilizers is non-empty.

\medskip
If a $\FFF$-invariant set is not strongly periodic then it is said to be weakly aperiodic. If it is not weakly periodic then it is said to be strongly aperiodic.
\end{definition}

In \cite{bara-lep1} it is proved that $X$ is weakly aperiodic. We prove here a stronger result:
\begin{maintheorem}\label{main-aperio}
$X$ is strongly aperiodic.
\end{maintheorem}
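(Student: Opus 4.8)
The plan is to prove the contrapositive formulation directly: show that for every $x\in X$ and every \emph{nonempty} word $\omega\in\FFF$ one has $T_\omega x\neq x$. Since the empty word $\epsilon$ is always a (trivial) stabilizer and the set of stabilizers is a sub-semigroup, this is exactly the statement that the stabilizer of each point is reduced to $\{\epsilon\}$, i.e. that $X$ is strongly aperiodic. The first thing to record is the geometric meaning of the equation to be excluded: $T_\omega x=x$ says that the subtree of $x$ rooted at the node $\omega$ coincides with $x$ itself, so I must forbid this ``self-similar proper subtree'' for all $x\in X$.

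The engine of the argument is a \emph{recognizability} (unique desubstitution) property for the substreetution $H$. I would first establish that the induced map $H\colon X\to X$ is invertible up to a bounded origin shift: every $x\in X$ admits a \emph{unique} decomposition $x=T_p\,H(y)$ with $y\in X$ and $p$ a node lying in the top block, where the offset $p$ is determined by a bounded neighbourhood of the root. This is the tree analogue of Moss\'e's theorem; minimality of $X$ (equivalently primitivity of $H$, both from \cite{bara-lep1}) provides the bounded-gap recurrence of the distinguished ``marker'' patches one needs to read the block boundaries locally, while weak aperiodicity guarantees that $H$ genuinely expands. Along with uniqueness I will use two structural facts that come with the decomposition: the injectivity of the assignment $y\mapsto T_p\,H(y)$, and the equivariance $H\circ T_{\omega'}=T_{H(\omega')}\circ H$ expressing that the subtree of $H(y)$ below the block-root $H(\omega')$ is the $H$-image of the subtree of $y$ below $\omega'$.

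Granting recognizability, the proof is an infinite descent. Suppose $\omega\neq\epsilon$ and $T_\omega x=x$, and write $x=T_p\,H(y)$. Because node $\omega$ carries an exact copy of $x$, its local decomposition is forced to be the $\omega$-translate of the one at the root; in particular node $\omega$ sits at the \emph{same} intra-block offset $p$, so its block-root is displaced from the top block-root by the $H$-image of some path $\omega'$ in $y$, giving the relation $p\omega=H(\omega')\,p$ in $\FFF$. Feeding this relation and the equivariance into the periodicity equation $H(y)(p\omega w)=H(y)(pw)$ yields $T_p\,H(T_{\omega'}y)=T_p\,H(y)$, whence $T_{\omega'}y=y$ by injectivity. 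Here $\omega'\neq\epsilon$ (since $\omega\neq\epsilon$), and because $H$ strictly lengthens every nonempty path — after replacing $H$ by a power if the block depths are not all at least $2$ — one has $|\omega'|<|\omega|$. Choosing $\omega$ of minimal length among all nontrivial stabilizers of points of $X$ then contradicts this strict decrease, so no nontrivial stabilizer can exist.

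The main obstacle is the recognizability step: there is no off-the-shelf Moss\'e theorem for substreetutions, so I expect the real work to be combinatorial. Concretely I would exhibit explicit marker patches in $\GJ$ — most likely exploiting the asymmetry between the $a$-follower map $T_a$ and the $b$-follower map $T_b$, so that a block's root is the unique node completing a rigid local pattern — and then verify, using primitivity, that these markers recur with bounded gaps along every branch, which pins down a unique block grid and forces the offset $p$ to be read off locally. The injectivity of $y\mapsto T_p H(y)$ must be checked in the same breath. Once these two points are in hand, the descent above closes the argument with no further input.
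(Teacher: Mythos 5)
Your overall strategy --- desubstitute and run an infinite descent on the length of a hypothetical stabilizer --- is exactly the strategy of the paper, and your descent step is sound as conditional reasoning. The problem is that the whole argument rests on a recognizability lemma (unique decomposition $x=T_{p}H(y)$ with an intrinsically determined offset $p$, so that the block grid of $H(y)$ can be matched against the canonical decomposition of $T_{\om}x$) which you explicitly leave unproven, deferring it to a marker-based, Moss\'e-type combinatorial program that is never carried out. As written this is a genuine gap: the descent cannot even start without that lemma, and a tree analogue of Moss\'e's theorem is precisely the kind of statement that does not come for free in this setting.

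What the paper does instead is close exactly this gap by quoting structure already established in \cite{bara-lep1} and recalled in its reminders section: every $\GA\in X$ has a well-defined type (odd, or $2^{n}$-type for a unique $n\ge 1$); every even tree has a unique $H$-preimage in $X$ (the source map $s$); and $T_{\om}\circ H=H\circ T_{\wt{s}(\om)}$ with $|\wt{s}(\om)|=|\om|/2$. This is your recognizability, but in a form where no intra-block offset $p$ is needed. Your bookkeeping with $p$ and the relation $p\om=H(\om')p$ is replaced by two cheap observations: (i) since $T_{\om}$ shifts the type when $|\om|$ is odd, any stabilizer word must have even length; (ii) if $\GA$ is odd, then $T_{\om_{0}}(\GA)$ is even and is stabilized by the cyclic permutation $\om_{1}\cdots\om_{n}\om_{0}$, so one can always reduce to an even tree and desubstitute directly. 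Termination also differs, though only cosmetically: you take a minimal-length counterexample, while the paper writes $|\om|=2^{k}(2m+1)$ and applies its cutting claim $k$ times until it produces a stabilizer of odd length, contradicting (i); both versions of the descent terminate. The substantive difference is that your proposal manufactures hard work (markers, bounded-gap recurrence, a new Moss\'e theorem) for an input that the cited prior work already supplies in a simpler, offset-free form; had you invoked the source map and the type partition from \cite{bara-lep1} instead of postulating a general recognizability theorem, your argument would essentially coincide with the paper's proof.
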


\begin{remark}
\label{rem-aperiodic}
We emphasize that being minimal does not prohibits the existence of $\GA\in X$ and $\om\in \FFF$, $|\om|\ge 1$ such that $T_{\om}(\GA)=\GA$. Hence being minimal and weakly aperiodic  does not imply that $X$ is strongly aperiodic. 
$\blacksquare$\end{remark}

\subsubsection{Complexity and topological entropy}

\begin{definition}
\label{def-kn}
A patch of size $n\ge 1$ is a finite binary tree with $n$ lines that appears in $\GJ$.
We denote by $K_{n}$ the set of patches of length $n$ in $X$. Its cardinal is $\kappa_{n}$.

For $C\in K_{n}$, $[C]$ denotes the set of $\GA\in X$ starting as $C$.
\end{definition}
We emphasize (see below for technical results on $X$) that for any $C\in K_{n}$ $[C]=B(\GA,2^{-n})$ for any $\GA$ in $[C]$.

\begin{maintheorem}\label{main-complexsubexpo}
There exists $1<C<+\8$ such that for every $n$,
$$n+2\le \kappa_{n}\le Cn+4.$$
\end{maintheorem}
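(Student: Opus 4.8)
The plan is to read $\kappa_{n}$ as counting the distinct labelings of the complete depth-$n$ subtree hanging below a node of $\GJ$: by minimality and density of $\{T_{\om}\GJ:\om\in\FFF\}$ in $X$, every patch of size $n$ already occurs in $\GJ$ itself, so $\kappa_{n}=\#\{\,\text{depth-}n\text{ truncations of }T_{\om}\GJ:\om\in\FFF\,\}$ and passing to the closure creates no new patches. Truncating the bottom line gives an onto map $K_{n+1}\to K_{n}$, so $n\mapsto\kappa_{n}$ is nondecreasing, and the whole theorem is the assertion that it grows exactly linearly. For the lower bound I would first show that $n\mapsto\kappa_{n}$ is \emph{strictly} increasing. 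If $\kappa_{n+1}=\kappa_{n}$ for some $n$, then $K_{n+1}\to K_{n}$ is a bijection, so every size-$n$ patch has a unique one-line extension; looking at the two subtrees rooted at the children of the root, unique extendability at level $n$ propagates to level $n+1$, whence $\kappa_{n+2}=\kappa_{n+1}$ and, inductively, $\kappa_{m}=\kappa_{n}$ for all $m\ge n$. But then each size-$n$ patch determines the whole infinite subtree below it, so $\om\mapsto T_{\om}\GJ$ takes only finitely many values and $X$ is finite, contradicting the (weak) aperiodicity established in \cite{bara-lep1}. Hence $\kappa_{n+1}\ge\kappa_{n}+1$, and together with a direct computation of the first few values this yields $\kappa_{n}\ge n+2$.

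The upper bound is where the substreetutional structure must enter. I would use that $\GJ=H(\GJ)$ with $H$ of fixed height $h$, so that $\GJ$ is tiled by the super-tiles $H(c)$, $c\in\{0,1\}$, and the coarse tree recording the colors of those super-tiles is again $\GJ$. The crucial input is recognizability: there is a radius $R$ such that the size-$R$ patch around any node determines that node's offset inside its super-tile and lets one read off the desubstituted configuration around it. This is the point at which I would either invoke a statement from \cite{bara-lep1} or derive it from strong aperiodicity (Theorem~\ref{main-aperio}). Recognizability then furnishes a desubstitution map sending a patch of size $n$ to a coarse patch of size roughly $n/h$ together with an offset ranging over a finite set.

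To turn this into a linear bound I would control the increments $\kappa_{n+1}-\kappa_{n}$ rather than $\kappa_{n}$ directly. Call a size-$n$ patch $C$ \emph{special} if it admits at least two extensions to size $n+1$, and let $e(C)$ be the number of such extensions; then $\kappa_{n+1}-\kappa_{n}=\sum_{C\text{ special}}(e(C)-1)$. The heart of the argument is to prove that the number of special patches of size $n$ stays bounded by a constant $S$ independent of $n$ and that each $e(C)$ is bounded by a constant $E$, which gives $\kappa_{n+1}-\kappa_{n}\le S(E-1)$ and hence $\kappa_{n}\le Cn+4$ after summing. The bound on special patches should come from the desubstitution: ambiguity at the bottom of a large special patch must originate either from ambiguity of its coarse patch, a special patch one scale down, or from one of boundedly many configurations created near the super-tile seams, so iterating the desubstitution traces every special patch back to a finite seed set.

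The hard part will be exactly this last claim, and it is where the Jacaranda combinatorics are genuinely needed. A priori the offsets picked up at the successive scales could multiply, which would only give a polynomial bound $n^{\log_{h}M}$; linearity forces these contributions to be \emph{additive}, i.e. it forces the number of special patches to stabilize rather than grow geometrically through the $\log_{h}n$ scales of desubstitution. Making this precise requires an explicit analysis of how the two children of a seam node can differ between two occurrences of the same coarse pattern under $H$, using the specific form of the substreetution to exclude the creation of new special patches beyond a fixed finite collection. I expect the bookkeeping of these boundary cases, together with the direct verification of the small-$n$ values that pin down the additive constants $+2$ and $+4$, to be the most delicate part of the write-up.
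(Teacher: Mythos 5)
Your lower bound is sound, and it is essentially the paper's own argument: extension gives $\kappa_{n+1}\ge\kappa_{n}$; if equality ever held, unique extendability would propagate level by level, the sequence would become stationary, each patch of that size would then determine a unique infinite tree in $X$, and $X$ would be finite, contradicting weak aperiodicity. The genuine gap is in the upper bound. You reduce linearity to the claim that the number of ``special'' patches (those admitting at least two one-line extensions) and the number of extensions of each are bounded uniformly in $n$. Uniformly bounded increments $\kappa_{n+1}-\kappa_{n}\le S(E-1)$ is a \emph{strictly stronger} statement than the linear bound you are trying to prove (linear growth of a monotone sequence does not imply bounded increments), and you do not prove it: you yourself flag the obstruction, namely that ambiguities picked up at successive scales of desubstitution could multiply rather than add, and you defer exactly that point to ``the most delicate part of the write-up''. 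So the proposal replaces the theorem by a harder unproven claim. A second, smaller gap: the Moss\'e-type recognizability radius $R$ you invoke is not established in \cite{bara-lep1} and is not a formal consequence of strong aperiodicity; what is actually available is weaker but more structured, namely that every tree of $X$ has a well-defined type (odd, or even of type $2^{n}$) and that every even tree has a unique $H$-preimage given by the source map $s$.

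The paper's proof avoids increments altogether and uses that type structure to compare scale $n$ with scale $2n$ directly. If $\GA\in X$ is even, its first $2n$ lines are determined, bijectively, by the first $n$ lines of $s(\GA)$; if $\GA$ is odd, its $H$-preimage is even, and its first $2n$ lines are read off inside an even patch of size $2n+2$. This yields the doubling inequality $\kappa_{2n}\le\kappa_{n}+\kappa_{n+1}$ (Proposition \ref{prop-ka2n}). An elementary comparison with the model sequence satisfying the equalities $v_{2n}=v_{n}+v_{n+1}$ and $v_{2n+1}=v_{2n+2}-1$ then shows that any increasing sequence satisfying the inequality grows at most linearly, which gives $\kappa_{2n}\le(\beta+3)(n-1)+4$ with $\beta=\kappa_{3}$. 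In short, linearity comes from a two-scale functional inequality supplied by the source map, not from a uniform bound on $\kappa_{n+1}-\kappa_{n}$; if you want to salvage your write-up, the quickest repair is to discard the special-patch bookkeeping and carry out this desubstitution count instead.
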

This result has to be compared to the one for ``classical'' substitutions (see \cite{pansiot}). It is known that the equivalent $p(n)$ for $\ka_{n}$ is either in $O(n^{2})$, $O(n\log n)$, $O(n\log\log n)$, $O(n)$ or $O(1)$. However, we point out that that proof does not seem to be easily adaptable to substreetutions. It is also interesting to connect this result with the concept of quasi-Sturmian
trees introduced in \cite{KLLS};
those are the colored  trees where  $\kappa_n = n+c$ (in the particular case where $c=1$ the trees are called simply Sturmian)
for $n \geq N_0$.

Hence, a natural question is to inquire if in our case an equality $\kappa_n = n+c$ holds,  at least for large values of $n$.

\bigskip
As said above entropy measures the complexity of the system. It matters with how $\kappa_{n}$ increases. For trees, the main issue, at least for general set of trees, is to find the right normalization. Going in that direction, Petersen and Salama define the entropy for a colored binary tree (see \cite{Petersen-Salama-20}, ) as
 $$h_{PS}:=\limsup_{\ninf}\dfrac1{2^{n}}\log \ka_{n}.$$
This makes sense since the normalization factor $2^{2^{n}}$ is the cardinality of $\{0,1\}^{2^{n}}$ and is approximatively the number of patches of length $n$ in $\{0,1\}^{\FFF}$.  

In another direction Ban and Chang define entropy as $h_{BC}:=\limsup_{\ninf}\dfrac1{n}\log{ \log \ka_{n}}$ (see \cite{Ban-Chang}). 

An immediate consequence of Theorem \ref{main-complexsubexpo} is

\begin{maincoro}\label{main-entroPSzero}
The Petersen-Salama  and Ban-Chang entropies for $\GJ$ are equal to zero.
\end{maincoro}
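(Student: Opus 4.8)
The plan is to derive Corollary \ref{main-entroPSzero} directly from the two-sided bound $n+2\le \kappa_n\le Cn+4$ established in Theorem \ref{main-complexsubexpo}. Since $\kappa_n$ grows at most linearly, both entropy normalizations will kill it. The argument is purely a matter of plugging the bound into the two limsup definitions and estimating.

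For the Petersen-Salama entropy, I would start from the upper bound $\kappa_n\le Cn+4$ and compute
$$
h_{PS}=\limsup_{\ninf}\dfrac{1}{2^{n}}\log\kappa_{n}\le \limsup_{\ninf}\dfrac{\log(Cn+4)}{2^{n}}.
$$
The numerator grows like $\log n$ while the denominator grows like $2^n$, so the ratio tends to $0$; hence $h_{PS}\le 0$. Since $\kappa_n\ge n+2\ge 1$, we have $\log\kappa_n\ge 0$, so $h_{PS}\ge 0$ as well, forcing $h_{PS}=0$.

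For the Ban-Chang entropy the estimate is entirely analogous but requires one more logarithm. From $n+2\le\kappa_n\le Cn+4$ I would bound
$$
h_{BC}=\limsup_{\ninf}\dfrac{1}{n}\log\log\kappa_{n}\le \limsup_{\ninf}\dfrac{\log\log(Cn+4)}{n},
$$
and again the numerator is of order $\log\log n$ while the denominator is linear in $n$, so the limit is $0$; thus $h_{BC}\le 0$. The lower bound $\kappa_n\ge n+2$ guarantees $\log\log\kappa_n$ is eventually nonnegative (for $n$ large enough that $\kappa_n\ge e$), so the limsup is also $\ge 0$, giving $h_{BC}=0$.

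There is essentially no obstacle here: the content is entirely contained in Theorem \ref{main-complexsubexpo}, and the corollary is a routine consequence of the fact that any sequence of polynomial (indeed linear) growth has vanishing entropy under both normalizations. The only point worth a word of care is checking that the lower bound $\kappa_n\ge n+2$ is genuinely needed to conclude the entropies are exactly zero rather than merely nonpositive, so I would make both inequalities explicit rather than discarding the lower bound.
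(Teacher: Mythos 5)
Your proof is correct and coincides with the paper's treatment: the paper states the corollary as an ``immediate consequence'' of Theorem \ref{main-complexsubexpo}, and the intended argument is exactly the routine substitution of the linear bound $n+2\le\kappa_{n}\le Cn+4$ into the two limsup definitions, as you do. Your added care about the lower bound (ensuring $\log\kappa_{n}\ge 0$ and, for Ban--Chang, $\kappa_{n}\ge e$ eventually, so the entropies are exactly zero and not merely nonpositive) is a sound and harmless refinement of the same approach.
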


\subsubsection{Entropy for a skew-product extension of $X$}

In order to define an entropy for a compact set $\CX$ of trees, Bufetov introduced
an  idea whose details are in section \ref{secbufetov-entropy}; he also related this entropy  to
the usual topological entropy  as follows (in our settings).

Let us set $\CY:= \{a, b\}^{\NN} \times \CX $  and  consider the skew-product $F:\CY \to \CY $ defined as
$$
  (\omega, x) \in  \{a, b\}^{\NN} \times \CX \mapsto F(\omega, x) = (\sigma(\omega), T_{\omega_0}(x)),
$$
 where $\sigma$ is the usual unilateral shift on $\{a, b\}^{\NN}$.
 Then, Bufetov  showed equality 
  $$h_{B}(\CT) :=  h_{top}(\CF) - \log{2}.$$

\medskip
In our setting we set $\CX=X$. Then we get:
\begin{maintheorem}\label{main-Fzeroentro}
The map $F$ has topological entropy equal to $\log2$.
\end{maintheorem}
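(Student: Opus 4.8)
The plan is to squeeze $h_{top}(F)$ between $\log 2$ and $\log 2$, using a factor map for the lower bound and an explicit count of spanning sets, fed by the polynomial patch estimate of Theorem \ref{main-complexsubexpo}, for the upper bound. For the lower bound, first observe that the projection $\pi:\CY\to\{a,b\}^{\NN}$, $\pi(\omega,x)=\omega$, is continuous, surjective and satisfies $\pi\circ F=\sigma\circ\pi$, so $(\{a,b\}^{\NN},\sigma)$ is a topological factor of $(\CY,F)$. Since topological entropy never increases under a factor map, this already gives $h_{top}(F)\ge h_{top}(\sigma)=\log 2$.

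For the upper bound I would equip $\CY$ with the max metric $d((\omega,x),(\omega',x'))=\max\bigl(d_{\{a,b\}^{\NN}}(\omega,\omega'),d_X(x,x')\bigr)$, where $d_X(\GA,\GB)=2^{-m}$ with $m$ the first level on which $\GA$ and $\GB$ disagree (so $[C]=B(\GA,2^{-n})$ for $C\in K_n$, as noted before Theorem \ref{main-complexsubexpo}). A short induction gives $F^{j}(\omega,x)=\bigl(\sigma^{j}\omega,\;T_{\omega_{j-1}}\circ\cdots\circ T_{\omega_{0}}(x)\bigr)$, and since $(T_{\omega_{j-1}}\cdots T_{\omega_{0}}\GA)(w)=\GA(\omega_{0}\cdots\omega_{j-1}w)$, the $j$-th fibre iterate simply reads the subtree of $x$ rooted at the node $\omega_{0}\cdots\omega_{j-1}$. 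Consequently, fixing $\eps=2^{-k}$, two points are $(n,\eps)$-close exactly when (i) $\omega$ and $\omega'$ agree on their first $n+k-1$ letters, and (ii) $x$ and $x'$ agree on the decorated branch
$$
S_{n,k}(\omega)=\{\,\omega_{0}\cdots\omega_{j-1}\,w:\ 0\le j\le n-1,\ |w|\le k-1\,\}.
$$

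Now I would build a small $(n,\eps)$-spanning set. There are $2^{\,n+k-1}$ length-$(n+k-1)$ base prefixes, each fixing condition (i). For each such prefix, $S_{n,k}(\omega)$ lies inside the full triangle of depth $n+k-1$, so the restriction of any $x\in X$ to $S_{n,k}(\omega)$ is already determined by the size-$(n+k)$ patch that $x$ begins with; hence there are at most $\ka_{n+k}$ distinct such restrictions. Choosing one representative per admissible pair yields a spanning set of cardinality $s(n,2^{-k})\le 2^{\,n+k-1}\,\ka_{n+k}$. Invoking Theorem \ref{main-complexsubexpo} to bound $\ka_{n+k}\le C(n+k)+4$ gives
$$
\frac1n\log s(n,2^{-k})\le \frac{n+k-1}{n}\log 2+\frac1n\log\bigl(C(n+k)+4\bigr)\xrightarrow[n\to\8]{}\log 2,
$$
and letting $k\to\8$ yields $h_{top}(F)\le\log 2$. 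Together with the lower bound this proves $h_{top}(F)=\log 2$.

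The main obstacle, and the only point requiring genuine care, is step (ii): pinning down precisely which nodes of $x$ are probed by the first $n$ fibre iterates at resolution $2^{-k}$, and checking that this finite decorated branch $S_{n,k}(\omega)$ is contained in a single patch of size $\approx n+k$. Once that is in place, the sub-exponential (here even polynomial) growth of $\ka_{m}$ from Theorem \ref{main-complexsubexpo} forces the fibre contribution to vanish, so only the base entropy $\log 2$ survives. One may phrase the same argument through Bowen's inequality $h_{top}(F)\le h_{top}(\sigma)+\sup_{\omega}h(F,\pi^{-1}(\omega))$ together with the fact that each Bowen fibre entropy is zero; the spanning-set computation above is simply the unpacked form of that statement.
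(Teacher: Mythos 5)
Your proof is correct and follows essentially the same route as the paper: the upper bound comes from counting at most $2^{n+O(1)}\,\kappa_{n+O(1)}$ Bowen classes (base cylinders of length $\approx n+k$ times initial patches of size $\approx n+k$) and invoking the linear bound of Theorem \ref{main-complexsubexpo}, while the lower bound is the full-shift contribution of the base. The only cosmetic differences are that you use spanning sets and a factor-map argument where the paper uses separated sets (bounding $r(n,\eps)\le 2^{n+p}\kappa_{n+p}$ and exhibiting $2^{n+p}$ separated points differing only in the base coordinate); your explicit identification of the probed node set $S_{n,k}(\omega)$ inside the depth-$(n+k)$ triangle just spells out the step the paper compresses into the phrase ``because $(X,\FFF)$ is expanding''.
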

  Hence, Bufetov's entropy is zero for the action of the free semi-group on the Jacaranda tree.

 \medskip
  We finish this subsection with an open question : 
  
 \begin{question}
 Is $(Y,F)$ an expansive dynamical system ?
\end{question}

We remind that expansiveness is a sufficient condition to get upper semi-continuity for the metric entropy. Our next step will be to check how invariant measures for $F$ may give better descriptions of $X$ or $\GJ$. For that purpose, studying thermodynamic formalism for $F$ seems a good way. Hence proving upper semi-continuity would be helpful. 

We remind that expansiveness means that for some $\eps>0$, if $d_{Y}(F^{n}(\om, \GA), F^{n}(\om',\GA'))<\eps$ for every $n\ge 0$, then $(\om,\GA)=(\om',\GA')$. It is immediate that $d_{Y}(F^{n}(\om, \GA), F^{n}(\om',\GA'))<1$ for every $n$ implies $\om=\om'$. On the other hand, $d_{Y}(F^{n}(\om, \GA), F^{n}(\om,\GA'))<\eps$  for every $n$ only means that $\GA$ and $\GA'$ do coincide along the \emph{enlarged} path $\om$. It is however not clear that this yields $\GA=\GA'$. 

Furthermore, for any $\GA$ in $X$, sites along the path $ b^{\8}$ are all equal to 0 except (may be) the root. This holds because any even line in $\GJ$ is a concatenation of 10 and any even line is a concatenation of 0010 and 0000. Similarly, for every even tree, the word along the path $(ab)^{\8}=ababab\ldots$ is  $\otimes(10)^{\8}$, where $\otimes$ is the root of the considered tree.  For odd tree it is $\otimes(01)^{\8}$. 

This shows that many trees do coincide along the two extremal paths but are different. Hence, some results go in the direction that $(Y,F)$ is expansive, some others go int eh opposite direction. This also shows that proving expansiveness is not immediate, nor non-expansiveness.




\subsection{Plan of the paper}
In Section \ref{sec-rappel} we remind some facts on substreetutions and the Jacaranda tree and we prove Theorem \ref{main-aperio}.
Section \ref{sec-patches} is devoted to give estimates for $\kappa_{n}$. This yields the proofs of Theorems \ref{main-complexsubexpo} and \ref{main-Fzeroentro}.


\section{Reminders on Substreetutions and proof that $\GJ$ is strongly aperiodic}\label{sec-rappel}.


\subsection{Reminders on Substreetutions}

\subsubsection{Binay colored trees and $\FFF$-action}

$\FFF$ is the free monoid with two generators, $a$ and $b$. It is the collection of finite words in $a$ and $b$. The empty word is denoted by $e$.
For $\om\in \FFF$, $|\om|$ is its length and denotes the number of letter that compose $\om$. By definition $|e|=0$.

The set of colored binary trees we consider is $\{0,1\}^{\FFF}$. All the trees we shal consider are these ones, and we will just refer to them as trees.
If $\GA$ is a tree and $\om$ is in $\FFF$, $\GA_{\om}$ is the digit at position $\om$. If $\GA$ is a tree $T_{a}(\GA)$ is the new tree obtain when considering the new root at site $a$ and thus forgetting the old root and the other part of the tree. Similarly $T_{b}(\GA)$ is the subtree with root at position $b$.

\bigskip
The distance between two trees $\GA$ and $\GB$ is $2^{-N(\GA,\GB)}$ where $N(\GA,\GB)$ is the minimal integer $n$ such that $\GA_{\om}\neq \GB_{\om}$ and $|\om|=n$.

In other words, $d(\GA,\GB)=2^{-n}$ means that $\GA$ and $\GB$ have different root if $n=0$, and $\GA_{\om}$ and $\GB_{\om}$ do coincide  for every $\om$, such that $|\om|\le n$ and for at least one $\om$ with $|\om|=n$,  one  of the followers of $\GA_{\om}$  is different to the same follower for $\GB_{\om}$.

Note that the space of trees $\ST$ is compact (for the metric we introduced) as a product of compact spaces. The subset of trees with root equal to 0 (or 1) is also compact as a closed set included into a compact set.

\bigskip
If $\GA$ is a binary tree,  and $\om=\om_{0}\ldots \om_{n}$ is in $\FFF$, we set  
$T_{\om}(\GA)=T_{\om_{n}}\circ\ldots \circ T_{\om_{1}}\circ T_{\om_{0}}(\GA)$. This corresponds to consider the substree in $\GA$ with root equal to the site $\om$ in $\GA$.

\subsubsection{Colored binary trees and substreetutions}

A {\bf substreetution}\footnote{Actually this is a \emph{constant length 2} substitution.} on trees is a map $H$ on the set of configurations defined by concatenation as follows:
\begin{enumerate}
\item $H$ maps each site to a truple (actually a root with two followers), the value depending only on the value of the digit at the site. See Figure \ref{Fig1-substi} with the box with dashline.
\item $H$ connects images of subtrees (followers) as indicated on Figure \ref{Fig1-substi}, with $\GI,\GJ,\GK,\GL\in \{H(\GA),H(\GB)\}$.
\end{enumerate}
 The order word $\GI \GJ \GK \GL$ is called the {\bf grammar} of the substreetution.

The substreetution is said to be \emph{marked} if $H(0)=\tritree{$i$}{}{}$ and $H(1)=\tritree{1-i}{}{}$, $i=0,1$.

\medskip
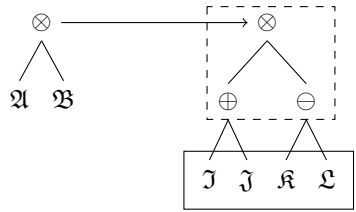
\begin{figure}[h]
\begin{tikzpicture}
\begin{scope}
\Tree [.\node (0) {$\otimes$} ; [.\node {$\GA$} ;  ] [.\node (r) {$\GB$}; ] ]
\end{scope}
\begin{scope}[xshift=3cm]
\Tree [.\node (1) {$\otimes$} ; [.\node(2) {$\oplus$} ; \node(a){$\GI$};  \node(b){$\GJ$};  ] [.\node (r) {$\ominus$}; \node(c){$\GK$}; \node(d){$\GL$}; ] ]
\node[draw, fit=(a) (b) (c) (d) ]{};
\end{scope}
\draw[->] (0)..controls +(east:2) and +(west:1)..(1);
\draw[dashed] (2.2,0.3) rectangle ++(1.7,-1.5);

\end{tikzpicture}
\caption{A market substreetution}
\label{Fig1-substi}
\end{figure}

\subsubsection{The Jacaranda tree}

The substreetution $H$ we consider here is the one  given by $\disp 0\mapsto$ \begin{forest}
[$0$[$1$][$0$]]
\end{forest} and $\disp 1\mapsto $\begin{forest}
[$1$[$1$][$0$]]
\end{forest}, equipped with the grammar BBAB.

There exists a unique fixed point $\GJ$ with root 0. It is called the \emph{Jacaranda tree}.
The closure of its orbit is a minimal dynamical system $X$ and is not periodic.
There also exists a unique fixed point $\GJ'$ with root 1. It coincides with $\GJ$ except at the root.

More precisely we have for $\otimes=0,1$: 

$$H\left(\tritree{$\otimes$}{$\GA$}{$\GB$}\right)=\heptatree{$\otimes$}{$1$}{$0$}{$H(\GB)$}{$H(\GB)$}{$H(\GA)$}{$H(\GB)$},$$

\subsubsection{Types, source map}

The map $\chi$ on words in $\{0,2\}^{2^{\N}}$ is defined by $\chi(10)=0010$ and $\chi(CD)=\chi(D)\chi(D)\chi(C)\chi(D)$ if $C$ and $D$ are  in $\{0,1\}^{2^{n}}$ (for any $n$ but the same $n$).

A line standing at an odd level (root is at line 0) in $\GJ$ is a concatenation of $10$'s. A line at level $2^{n}(2m+1)$ is a concatenation of $\chi^{n}(10)$.

If $\GA$ belongs to $X:=\ol{\{T_{\om}(\GJ),\om\in \FFF\}}$, there exists $\om_{k}$ with $|\om_{k}|\to+\8$ as $k\to+\8$ such that
$$\GA=\lim_{k\to+\8}T_{\om_{k}}(\GJ).$$
Then it turns out that there exists a unique $n$ such that for any sufficiently big $k$,
$$|\om_{k}|=2^{n}(2m_{k}+1).$$
If $n=0$ we say that $\GA$ is odd (or of odd-type). If $n\ge 1$ we that that $\GA$ is even, and more precisely we say that $\GA$ is of $2^{n}$-type.
Odd and even trees form a partition, and more generally trees of type $2^{n}$ with $n\ge 1$ form a partition of even trees.

\bigskip
If $\GA$ is of $2^{n}$-type with $n\ge 1$, there exists a unique $\GB$ such that $H(\GB)=\GA$. Furthermore, $\GB$ is of $2^{n-1}$-type. The map $\GA\to\GB$ is called the \emph{source map} and we write $\GB=s(\GA)$.

Note that the source can actually be defined on $\FFF$, since for every $\om\in \FFF$ there exists a unique $\wt s(\om)$ with length $|\om|/2$ such that
$$T_{\om}\circ H=H\circ T_{\wt{s}(\om)}$$

\subsection{Proof of Theorem \ref{main-aperio}}
Assume that there exists $\GA\in X$ and $\om\in \FFF$, $|\om|\ge 1$ such that $\GA=T_{\om}(\GA)$. Note that $|\om|$ must be an even integer since $T_{\om}(\GA)$ must have the same parity than $\GA$ (either odd or even).

\begin{claim}[Claim Cutting]\label{claim-cuttingaperio}
There exist $\GA' \in X$ and $\om'\in \FFF$ with $|\om'|=\dfrac{|\om|}{2}$ such that $T_{\om'}(\GA')=\GA'$.
\end{claim}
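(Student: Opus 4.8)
The plan is to run a descent through the source map, after first arranging that $\GA$ is even. Recall the two facts I will lean on: the intertwining $T_\om\circ H=H\circ T_{\wt s(\om)}$ with $|\wt s(\om)|=|\om|/2$, and the fact that an even tree $\GA$ admits a \emph{unique} source $s(\GA)$ with $H(s(\GA))=\GA$. Since $|\om|$ is even (already observed), $\wt s(\om)$ is a genuine word of length $|\om|/2\ge 1$, so the construction will be consistent once $\GA$ has been replaced by a tree lying in the image of $H$. The idea is simply to push the identity $T_\om(\GA)=\GA$ through $H$ to the preimage, where the length of the period is halved.

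First I would dispose of the case where $\GA$ is odd, reducing it to the even case at no cost in the length of $\om$. Write $\om=\om_0\cdots\om_{|\om|-1}$ and set $\GB:=T_{\om_0}(\GA)$; this lies in $X$ because $X$ is forward invariant under $T_a$ and $T_b$, and it is even because a follower of an odd tree sits at an even level. Let $\bar\om:=\om_1\cdots\om_{|\om|-1}\om_0$ be the cyclic shift of $\om$. Using $T_\mu\circ T_\nu=T_{\nu\mu}$ one computes
$$T_{\bar\om}(\GB)=T_{\bar\om}\bigl(T_{\om_0}(\GA)\bigr)=T_{\om_0\bar\om}(\GA)=T_{\om\om_0}(\GA)=T_{\om_0}\bigl(T_\om(\GA)\bigr)=T_{\om_0}(\GA)=\GB,$$
so $(\GB,\bar\om)$ is again a periodicity with $|\bar\om|=|\om|$ even but with an even tree. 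Replacing $(\GA,\om)$ by $(\GB,\bar\om)$, I may assume from now on that $\GA$ is even.

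With $\GA$ even, let $\GB:=s(\GA)$, so that $H(\GB)=\GA$. The intertwining relation then gives
$$H(\GB)=\GA=T_\om(\GA)=T_\om\bigl(H(\GB)\bigr)=H\bigl(T_{\wt s(\om)}(\GB)\bigr),$$
and uniqueness of the source (injectivity of $H$) forces $T_{\wt s(\om)}(\GB)=\GB$. Setting $\GA':=\GB$ and $\om':=\wt s(\om)$ yields $T_{\om'}(\GA')=\GA'$ with $|\om'|=|\om|/2$, which is the claim. The one genuinely non-formal point — and the step I expect to be the main obstacle — is to verify that $\GB=s(\GA)$ really belongs to $X$ and not merely to the ambient space of trees. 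Here I would use that $\GJ$ is a fixed point, $H(\GJ)=\GJ$: writing $\GA=\lim_k T_{\om_k}(\GJ)$ with the $|\om_k|$ eventually even, the relation $T_{\om_k}\circ H=H\circ T_{\wt s(\om_k)}$ gives $T_{\om_k}(\GJ)=H\bigl(T_{\wt s(\om_k)}(\GJ)\bigr)$ with each $T_{\wt s(\om_k)}(\GJ)\in X$. By compactness of $X$ a subsequence converges to some $\GC\in X$, and by continuity of $H$ one gets $H(\GC)=\GA=H(\GB)$; uniqueness of the source then gives $\GC=\GB$, so $\GB\in X$, completing the argument.
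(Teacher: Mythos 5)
Your proof is correct and takes essentially the same route as the paper: reduce the odd case to the even case by cyclically shifting $\om$ to $\wh\om=\om_{1}\cdots\om_{n}\om_{0}$ and passing to $T_{\om_{0}}(\GA)$, then in the even case push the identity $T_{\om}(\GA)=\GA$ through $H$ via the intertwining $T_{\om}\circ H=H\circ T_{\wt s(\om)}$ and conclude by injectivity of $H$. Your additional verification that $s(\GA)$ lies in $X$ (via compactness and continuity of $H$) is a point the paper silently absorbs into its definition of the source map, so it is welcome extra care rather than a divergence.
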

\begin{proof}[Proof of the claim]
If $\GA$ is even, then we set $\GA':=s(\GA)$ and there exists $\om'\in \FFF$ with $|\om'|=\dfrac{|\om|}{2}$ such that
$$H(\GA')=\GA=T_{\om}(\GA)=T_{\om}\circ H(\GA')=H(T_{\om'}(\GA')).$$

If $\GA$ is odd, set $\om=\om_{0}\ldots \om_{n}$, $\wh\om:=\om_{1}\ldots\om_{n}\om_{0}$. Then
$$T_{\om_{0}}(\GA)=T_{\om_{0}}T_{\om}(\GA)=T_{\wh\om}(T_{\om_{0}}(\GA)).$$
Furthermore, $T_{\om_{0}}(\GA)$ is even.  Hence we are sent to the previous point.
\end{proof}
Applying the claim, we get that $|\om'|$ must be an even integer. There exists some $k$ and $m$ such that $|\om|=2^{k}(2m+1)$.
Hence, applying $k$ times Claim \ref{claim-cuttingaperio}  we arrive to a contradiction: there exists $\GB\in X$ and $\om''$ with odd length such that 
$$\GB=T_{\om''}(\GB).$$

\section{Estimations for $\kappa_{n}$}\label{sec-patches}

\subsection{Bound from below  for $\kappa_{n}$ and beginning of proof of Theorem \ref{main-complexsubexpo}}

First, we state two lemmas that extend known-results for classical substitutions. 

\begin{lemma}
\label{lem-kappan}
$\kappa_{n+1}\ge \kappa_{n}$.
\end{lemma}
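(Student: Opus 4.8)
The plan is to mimic the classical argument showing that the complexity function of an infinite word is non-decreasing, transported to the tree setting. The key object is the \emph{truncation} map $\tau\colon K_{n+1}\to K_{n}$ that sends a patch of size $n+1$ to the patch of size $n$ obtained by forgetting its deepest line. Concretely, a patch $C\in K_{n+1}$ is a finite binary tree with $n+1$ lines; its top $n$ lines form a finite binary tree with $n$ lines, and I take $\tau(C)$ to be this sub-patch. The whole proof reduces to checking that $\tau$ is a well-defined surjection, for then $\kappa_{n+1}=|K_{n+1}|\ge|K_{n}|=\kappa_{n}$.

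First I would check that $\tau$ is well-defined, i.e. that $\tau(C)$ indeed belongs to $K_{n}$. This is immediate: if $C$ occurs in $\GJ$ rooted at some site $\om\in\FFF$, that is, $C$ records the digits $\GJ_{\om\nu}$ for all $\nu$ of length at most $n$, then its truncation records the digits $\GJ_{\om\nu}$ for $|\nu|\le n-1$, which is exactly a patch of size $n$ occurring at the same site $\om$. By minimality of $X$, established in \cite{bara-lep1}, the patches occurring in $\GJ$ coincide with those occurring in any element of $X$, so it is harmless to read all occurrences directly inside $\GJ$.

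The heart of the argument is surjectivity. Let $D\in K_{n}$ be arbitrary and pick a site $\om\in\FFF$ at which $D$ occurs in $\GJ$. Since $\GJ\in\{0,1\}^{\FFF}$ assigns a digit to \emph{every} site, in particular to each $\om\nu$ with $|\nu|=n$, the digits $\GJ_{\om\nu}$ with $|\nu|\le n$ define a genuine patch $C\in K_{n+1}$ of size $n+1$ occurring at $\om$, and by construction $\tau(C)=D$. Thus every patch of size $n$ is the truncation of at least one patch of size $n+1$, so $\tau$ is surjective and the desired inequality follows.

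I expect no serious obstacle here: unlike the boundary phenomena that complicate the one-sided word case, the tree $\GJ$ is genuinely infinite and every node has two followers, so each occurrence of a size-$n$ patch automatically extends one line deeper. The only point requiring minor care is the bookkeeping that a truncation of a patch occurring in $\GJ$ again occurs in $\GJ$, and that conversely an occurrence of $D$ can always be completed to an occurrence of a size-$(n+1)$ patch; both follow at once from the fact that occurrences are indexed by sites $\om\in\FFF$ and that the full set of descendants of $\om$ up to any prescribed depth is always present in $\GJ$.
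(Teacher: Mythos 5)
Your proof is correct and is essentially the paper's argument in dual form: the paper picks, for each $C\in K_{n}$, a continuation $\wt C\in K_{n+1}$ and notes this map is injective, while you define the canonical truncation $K_{n+1}\to K_{n}$ and show it is surjective via the same extension fact. The two formulations are interchangeable, so nothing further is needed.
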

\begin{proof}
Each element $C$ in $K_{n}$ has a ``continuation'' $\wt C$ to be an element in $K_{n+1}$. Hence, two different $C$ and $C'$ in $K_{n}$ yields two different $\wt C$ and $\wt C'$ in $K_{n+1}$. This yields $\kappa_{n+1}\ge \ka_{n}$.
\end{proof}

\begin{lemma}
\label{lem-kanstatio}
Assume that $\ka_{n_{0}+1}=\ka_{n_{0}}$ for some $n_{0}$. Then, for every $n\ge n_{0}$, $\ka_{n}=\ka_{n_{0}}$.
\end{lemma}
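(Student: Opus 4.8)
The plan is to follow the classical mechanism for subshifts, where a single coincidence $\kappa_{n_0+1}=\kappa_{n_0}$ propagates to all larger $n$, but to replace the ``suffix of a word'' step by a ``passage to the two immediate subtrees'' step adapted to the $\FFF$-action. First I would reformulate the hypothesis. The continuation argument of Lemma \ref{lem-kappan} says exactly that the restriction map $r_n\colon K_{n+1}\to K_n$ forgetting the last line is surjective, so $\kappa_{n+1}=\sum_{C\in K_n}\#r_n^{-1}(C)$ with every fiber of size at least $1$. Hence $\kappa_{n+1}=\kappa_n$ is equivalent to the property $P(n)$: \emph{every patch of size $n$ admits a unique one-line extension inside $X$}. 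The hypothesis is precisely $P(n_0)$, and the goal becomes to prove $P(n)$ for all $n\ge n_0$ by induction, since each such equality between consecutive values forces $\kappa_n=\kappa_{n_0}$.

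For the inductive step, I would assume $P(n)$ for some $n\ge n_0$ and take an arbitrary $D\in K_{n+1}$, that is, a patch occupying lines $0,\dots,n$. Its two immediate subtrees $T_a(D)$ and $T_b(D)$ are patches of size $n$: indeed $D$ appears at some site $\nu$ of $\GJ$, so $T_a(D)$ and $T_b(D)$ appear at $\nu a$ and $\nu b$, hence lie in $K_n$. The decisive observation is that the new line of any extension of $D$ — line $n+1$, indexed by the words $\mu$ with $|\mu|=n+1$ — splits according to the first letter of $\mu$: the words $a\omega$ with $|\omega|=n$ form line $n$ of the subtree at $a$, and the words $b\omega$ with $|\omega|=n$ form line $n$ of the subtree at $b$. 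Consequently, if $D_1,D_2\in K_{n+2}$ are two extensions of $D$, then $T_a(D_1)$ and $T_a(D_2)$ are both one-line extensions of $T_a(D)$, so $P(n)$ forces $T_a(D_1)=T_a(D_2)$, and likewise $T_b(D_1)=T_b(D_2)$. Since every site of depth between $1$ and $n+1$ lies in exactly one of the two subtrees while the root is already fixed by $D$, this yields $D_1=D_2$. Existence of an extension being free from Lemma \ref{lem-kappan}, we obtain $P(n+1)$, and the induction closes.

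The step I expect to require the most care is this reduction in the inductive step, which is the genuine tree-analog of the classical suffix argument: one must be scrupulous with the line-count bookkeeping (a patch of size $n$ occupies lines $0,\dots,n-1$, so line $n$ of $T_a(D)$ matches line $n+1$ of $D$ below the letter $a$), and one must verify that $T_a(D)$ and $T_b(D)$ are themselves legal patches of $X$ rather than mere formal subtrees — which rests on the identification of the patches of $\GJ$ with those of $X$ furnished by minimality. The remaining verifications, namely that controlling both immediate subtrees controls the whole added line and that the root contributes nothing new, are routine once the indexing by first letter of $\mu$ is set up correctly.
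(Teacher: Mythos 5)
Your proof is correct and follows essentially the same route as the paper: both arguments reformulate $\kappa_{n_0+1}=\kappa_{n_0}$ as uniqueness of the one-line extension of each patch, then propagate this by induction using the decomposition of a patch of size $n+1$ into its root and its two immediate subtrees $T_a(D),T_b(D)\in K_n$, whose unique extensions determine the unique extension of $D$. Your write-up is in fact more careful than the paper's (which leaves the line-count bookkeeping and the legality of the subtree patches implicit), but the underlying idea is identical.
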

\begin{proof}
Each $C\in K_{n_{0}}$ admits a unique continuation (on the bottom) to define an element $\wt C$ in $K_{n_{0}}$. This holds because otherwise, we would get $\ka_{n_{0}+1}>k_{n_{0}}$.
Set images $T_{a}(\wt C)$ and $T_{b}(\wt C)$ (see Fig. \ref{fig-kappastatio}) define two elements of $K_{n}$, respectively denoted by $C'$ and $C''$.

\begin{figure}[htbp]
\begin{center}
\includegraphics[scale=0.7]{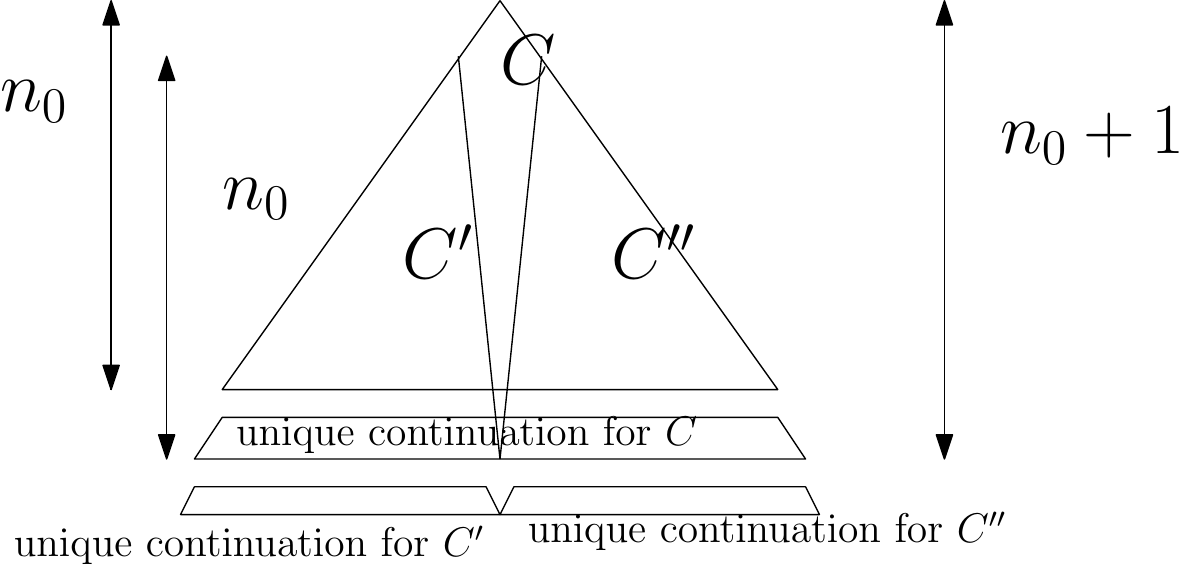}
\caption{Eventually stationarity for $\kappa_{n}$}
\label{fig-kappastatio}
\end{center}

\end{figure}

Again, $C'$ and $C''$ admit a unique continuation, which yields that $\wt C$ admits a unique continuation $\wh C$ in $K_{n_{0}+2}$. By induction we get $\ka_{n}=\ka_{n_{0}}$ for any $n\ge n_{0}$.
\end{proof}

\begin{proposition}
\label{prop-kappastrictmono}
The sequence $(k_{n})$ is increasing.
\end{proposition}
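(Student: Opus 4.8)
The plan is to combine the two preceding lemmas with the weak aperiodicity of $X$ established in \cite{bara-lep1}, arguing by contradiction in the spirit of the Morse--Hedlund theorem. By Lemma \ref{lem-kappan} the sequence $(\kappa_{n})$ is non-decreasing, so it fails to be \emph{strictly} increasing precisely when $\kappa_{n_{0}+1}=\kappa_{n_{0}}$ for some $n_{0}$. I would assume such an $n_{0}$ exists and deduce that $X$ is finite, contradicting the fact that $X$ is weakly aperiodic, hence infinite by Definition \ref{def-perio}.

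First I would invoke Lemma \ref{lem-kanstatio} to get $\kappa_{n}=\kappa_{n_{0}}$ for every $n\ge n_{0}$, so that the complexity stays bounded. The heart of the matter is the \emph{unique continuation} property already exploited in the proof of that lemma: once $\kappa_{n_{0}+1}=\kappa_{n_{0}}$, each $C\in K_{n_{0}}$ extends to a single $\wt C\in K_{n_{0}+1}$, and (via the children $T_{a}(\wt C)$, $T_{b}(\wt C)$) this uniqueness propagates downward to all levels. Concretely, I would show that any $\GA\in X$ is entirely determined by its top $n_{0}$ lines: the patch $C$ formed by those lines has a unique continuation in $K_{n_{0}+1}$, which must coincide with the top $n_{0}+1$ lines of $\GA$; iterating, every subsequent line of $\GA$ is forced, so $\GA$ is recovered from $C$ alone.

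Consequently the restriction map sending $\GA\in X$ to the patch of its first $n_{0}$ lines is injective into the finite set $K_{n_{0}}$, whence $\# X\le \kappa_{n_{0}}<+\8$. This would make $X$ strongly periodic, contradicting its weak aperiodicity proved in \cite{bara-lep1}. Therefore no such $n_{0}$ exists, and $\kappa_{n+1}>\kappa_{n}$ for every $n$, i.e.\ $(\kappa_{n})$ is strictly increasing.

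The step I expect to be the main obstacle is the rigorous propagation of unique continuation from the top patch to the whole tree: one must verify that the uniqueness granted by $\kappa_{n_{0}+1}=\kappa_{n_{0}}$ applies to the patch sitting at the top of $\GA$ at \emph{every} level, and that the compatibility of successive continuations reconstructs $\GA$ exactly rather than merely bounding the number of candidates. Once this Morse--Hedlund-type dichotomy between bounded complexity and finiteness is secured, the contradiction with \cite{bara-lep1} is immediate and the proposition follows.
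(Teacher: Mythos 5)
Your proof is correct and follows essentially the same route as the paper: both combine Lemma \ref{lem-kappan} and Lemma \ref{lem-kanstatio}, use the unique-continuation property to show that a patch of length $n_{0}$ determines a unique element of $X$ (the paper phrases this as a finite closed graph under $T_{a}$, $T_{b}$, you phrase it as injectivity of the restriction map), and conclude that $X$ would be finite, contradicting the weak aperiodicity proved in \cite{bara-lep1}.
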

\begin{proof}
By Lemma \ref{lem-kappan} the sequence is non-decreasing. If for some $n_{0}$ $\ka_{n_{0}+1}=\ka_{n_{0}}$, then the sequence is stationary (Lemma \ref{lem-kanstatio}).

This yields that for any $C$ with length $n_{0}$, there exists a unique element in $X$ starting as $C$. Let us denote it by $\GA(C)$. This yields that the graph with vertices all the $\GA(C)$, $C\in K_{n_{0}}$ and arrows defined by  images by $T_{a}$ and $T_{b}$, is closed. Hence $\GJ$ is strongly pre-periodic, which is false.
\end{proof}

An immediate corollary is:

\begin{corollary}
\label{kan-lin}
For any $n$, $\ka_{n}\ge n+2$. Hence $\disp\liminf_{\ninf}\dfrac{\ka_{n}}{n}\ge 1$.
\end{corollary}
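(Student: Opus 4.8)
The plan is to combine the strict monotonicity established in Proposition \ref{prop-kappastrictmono} with an explicit lower bound for the first term, and then telescope.

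First I would record that Proposition \ref{prop-kappastrictmono}, together with Lemma \ref{lem-kappan}, says exactly that $(\ka_n)$ is a \emph{strictly} increasing sequence of positive integers. Since the terms are integers, strict monotonicity upgrades to the quantitative statement $\ka_{n+1}\ge \ka_n+1$ for every $n\ge 1$. This is the substantial input, and it is already in hand, which is why the corollary is advertised as immediate.

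Next I would establish the base case $\ka_1\ge 3$ by exhibiting three distinct patches of size $1$ in $\GJ$. A patch of size $1$ is determined by the value at a site together with the values of its two followers, i.e.\ by a triple $(\otimes,\GA_a,\GA_b)$. Using that every odd line of $\GJ$ is a concatenation of $10$, while every line at level $2(2m+1)$ is a concatenation of $\chi(10)=0010$, one reads directly off the first few lines of the tree that the triples $(0,1,0)$ (at the root), $(1,1,0)$ (at an even-level site carrying a $1$, whose two followers lie on an odd line and hence read $10$) and $(1,0,0)$ (at the leftmost odd-level site, whose two followers lie on a level $2(2m+1)$ line and read $00$) all occur in $\GJ$. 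These three triples are pairwise distinct, so $\ka_1\ge 3$.

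From here an immediate induction gives $\ka_n\ge \ka_1+(n-1)\ge n+2$ for all $n\ge 1$. Finally, dividing by $n$ yields $\ka_n/n\ge 1+2/n$, whence $\liminf_{\ninf}\ka_n/n\ge 1$. The only genuine point requiring care is the base case: one must verify that the three proposed triples really appear, which reduces to reading the first two lines of $\GJ$ off the substreetution rule and the description of the lines by $\chi$. Everything else is formal, the strict growth of $(\ka_n)$ doing all the work.
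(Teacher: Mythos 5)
Your overall strategy --- strict monotonicity of $(\ka_n)$ from Proposition \ref{prop-kappastrictmono}, an explicit base case, then telescoping over the integers --- is exactly the argument the paper intends (it states the corollary as immediate, with the base value $\ka_2=4$ computed later in Section \ref{sec-patches}). But your base case contains a genuine off-by-one error. By Definition \ref{def-kn}, a patch of size $n$ is a finite tree with $n$ \emph{lines}, so a patch of size $1$ is a single colored root, and $\ka_1=2$; the triples $(\otimes,\GA_{a},\GA_{b})$ you enumerate are trees with two lines, i.e.\ patches of size $2$. This is not a convention you are free to choose: the paper itself, when listing the four admissible triples, concludes ``$\ka_{2}=4$'', and your claim $\ka_1\ge 3$ is in fact false. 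Consequently, what your argument really establishes is $\ka_2\ge 3$, and telescoping from there yields only $\ka_n\ge \ka_2+(n-2)\ge n+1$, one short of the stated bound $n+2$.

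The repair is easy but requires the fourth triple. Your three triples $(0,1,0)$, $(1,1,0)$, $(1,0,0)$ are correctly exhibited, but $(0,0,0)$ also occurs in $\GJ$: even lines are concatenations of $0010$ and $0000$ blocks (for instance line $4$ of $\GJ$ reads $0010\,0010\,0000\,0010$), and the site of line $3$ sitting above the second half of a $0000$ block carries a $0$ whose two followers are both $0$. Hence $\ka_2\ge 4$ (in fact $\ka_2=4$: since odd lines are concatenations of $10$ and even lines of $0010$/$0000$, an aligned pair of followers can only read $10$ or $00$, so no other triple occurs), and then strict monotonicity gives $\ka_n\ge \ka_2+(n-2)=n+2$ for all $n\ge 2$, from which the liminf statement follows as you say. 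Note in passing that, read literally, the corollary's ``for any $n$'' cannot include $n=1$, since $\ka_1=2$; both the bound and the telescoping argument are correct from $n=2$ on, and this boundary case is where your misreading of the patch-size convention crept in.
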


%
%
%

\subsection{Bound from above for $\kappa_{n}$ and end of the proof of Theorem \ref{main-complexsubexpo}}

\subsubsection{An inequality sastified by $(\kappa_{n})$}

\begin{proposition}
\label{prop-ka2n}
For any $n$, $\ka_{2n}\le \ka_{n}+\ka_{n+1}$.
\end{proposition}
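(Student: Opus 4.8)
===PROOF PROPOSAL===

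The plan is to exploit the substreetution structure together with the source map $s$ to relate patches of size $2n$ to patches of sizes $n$ and $n+1$. The key observation is that a patch $C$ of size $2n$ in an even tree $\GA$ is the image under $H$ of a smaller structure living in $s(\GA)$, because $H$ has constant length $2$ (it doubles the number of lines). More precisely, the top $n$ lines of $C$ are determined by applying $H$ to a patch of size roughly $n$ in the source tree, while the two ``halves'' coming from the grammar BBAB govern how lower lines are filled. So the heuristic count is: the number of patches of size $2n$ should be controlled by the number of size-$n$ patches (the sources) together with the extra information needed to place the image, which is captured by one more line, giving the $\ka_{n}+\ka_{n+1}$ bound.

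First I would partition $K_{2n}$ according to the \emph{type} of the trees in which each patch appears, distinguishing the odd case from the even ($2^{k}$-type, $k\ge1$) case, since the source map and the line-structure described via $\chi$ behave differently in those cases. For patches arising in even trees, I would use that every such $\GA$ satisfies $\GA=H(s(\GA))$ with $s(\GA)$ of strictly smaller type, so a size-$2n$ window in $\GA$ is the $H$-image of a window in $s(\GA)$. Because $H$ sends each single site to a fixed triple and wires followers via a fixed grammar, the map ``patch of size $2n$'' $\mapsto$ ``underlying patch in the source'' is essentially injective once one records the small amount of boundary data. I expect the counting to show that the size-$2n$ patch is reconstructed from a size-$n$ source patch, up to a bounded overhead that is absorbed by replacing one factor $\ka_{n}$ with $\ka_{n+1}$.

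The concrete mechanism I would make precise is the relation $T_{\om}\circ H = H\circ T_{\wt s(\om)}$ from the source discussion: a patch of size $2n$ rooted at position $\om$ equals $H$ applied to the subtree rooted at $\wt s(\om)$, read off to depth $n$. Thus each size-$2n$ patch is determined by (i) a size-$n$ patch in the source together with (ii) the parity/offset datum recording whether the window starts on an even or odd line of the image, the latter contributing the ``$+1$'' that upgrades $\ka_{n}$ to $\ka_{n+1}$. I would then combine the counts over the two parity classes and check the arithmetic yields $\ka_{2n}\le \ka_{n}+\ka_{n+1}$.

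The hard part will be controlling the \emph{boundary effects}: a window of $2n$ lines in the image need not align with the $H$-block boundaries, so its top and bottom lines may be ``cut'' in the middle of an $H$-image triple, and one must verify that this misalignment is exactly accounted for by the single extra line in $\ka_{n+1}$ rather than by an uncontrolled multiplicative factor. Establishing that the reconstruction map from size-$2n$ patches to pairs (source patch, offset) is genuinely injective — equivalently, that two distinct size-$2n$ patches cannot share both the same source patch and the same offset — is where the combinatorial care is needed, and I would use the explicit form of the grammar BBAB and the rigidity that each site has a single-valued $H$-image to close that gap.
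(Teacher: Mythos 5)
Your overall skeleton coincides with the paper's: split $K_{2n}$ according to whether the patch begins an even-type or an odd-type tree, let the even class account for $\ka_{n}$ and the odd class for $\ka_{n+1}$. Your even case is correct and is exactly the paper's argument: since $H$ is marked and of constant length $2$, the first $2n$ lines of $H(\GB)$ and the first $n$ lines of $\GB$ determine one another, so the even class contributes exactly $\ka_{n}$. Note, however, that your worry about windows being ``cut in the middle of an $H$-image triple'' is misplaced here: patches in this paper are initial segments anchored at the root of a tree of $X$ (there is no sliding window), so an even-type patch of size $2n$ is perfectly aligned with the $H$-blocks and no ``boundary data'' is needed.

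The genuine gap is the odd class, which is where the whole content of the proposition sits; your proposal both leaves it open (you say so yourself: ``the hard part will be\ldots'') and sets it up in a way that cannot yield the stated bound. Your encoding of an odd-type patch by ``a source patch together with an offset/branch datum'' is a map from pairs \emph{onto} patches: an odd-type tree is $T_{x}(H(\GB))$ with $x\in\{a,b\}$, and its first $2n$ lines are determined by the first $n+1$ lines of $\GB$ together with the branch $x$. Read this way you only obtain $\ka_{2n,o}\le 2\,\ka_{n+1}$, and the multiplicative factor $2$ ruins the inequality: you would end with $\ka_{2n}\le \ka_{n}+2\ka_{n+1}$, not $\ka_{n}+\ka_{n+1}$. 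What is needed is an \emph{injection} from odd patches into size-$(n+1)$ data, and that is precisely the paper's device, which your plan never formulates: given an odd-type patch $C$ and $\GA\in[C]$, pass to a preimage $\GA'\in X$ with $T_{x}(\GA')=\GA$; then $\GA'$ is of even type, its patch $C'\in K_{2n+2}$ determines $C$, the even case applied at size $2(n+1)$ gives $\ka_{2n+2,e}=\ka_{n+1}$, and the paper argues that the assignment $C\mapsto C'$ is injective, whence $\ka_{2n,o}\le\ka_{n+1}$. Establishing that injectivity (using, e.g., that the first line of every even tree is $10$, so the root of the odd patch already records the branch $x$ under which it sits in $C'$) is exactly the ``combinatorial care'' you deferred; without it, and with your surjective (source, offset) parametrization in its place, the proof does not close.
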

\begin{proof}
Inequality is obvious if $n=1$.
Let $n\ge 2$ be in $\NN$. Let $C$ be an element in $K_{2n}$ and let $\GA\in X$ be in $[C]$. We know first four lines for $\GA$, hence we can determine if it is even or odd.

For simplicity we denote by $K_{n,o}$, $K_{n,e}$ the set of patches coinciding with odd or even trees. Their cardinality is respectively denoted by $\ka_{n,o}$ and $\ka_{n,e}$.

If $\GA$ is even, then  we set $\GB:=s(\GA)$. The first $2n$-lines  in $\GA$ are entirely defined by the first $n$ lines in $\GB$. Furthermore this definition is one-to-one.
This means that the number of patches $C$ in $K_{2n}$ of even type is equal to $\ka_{n}$, hence $\ka_{2n,e}=\ka_{n}$.

If $\GA$ is of odd type, we consider any preimage $\GA'$ of $\GA$. It belong to some (unique) $[C']$, with $C'\in K_{2n+2}$, and is even. Two different such $\GA$'s belonging to different $[C']$'s  yield two different $\GA'$'s.
This yields that $\ka_{2n,o}$  is lower or equal to $\ka_{2n+2,e}=\ka_{n+1}$.
\end{proof}

\subsubsection{Special sequences satisfying  that inequality}

We consider two numerical sequences $(u_{n})$ and $(v_{n})$ satisfying
\begin{enumerate}
\item $u_{2}=v_{2}=\al\ge 1$, $u_{3}=v_{3}=\be\ge \al+1$,
\item $\disp\forall n\ge 2, \begin{cases}
u_{2n}\le u_{n}+u_{n+1},\\
v_{2n}=v_{n}+v_{n+1}.
\end{cases}$
\item $(u_{n})$ and $(v_{n})$ are increasing
\item $\forall n\ge 2$, $v_{2n+1}= v_{2n+2}-1$.
\end{enumerate}

\begin{lemma}
\label{lem-unvn}
For every $n\ge 2$, $u_{n}\le v_{n}$.
\end{lemma}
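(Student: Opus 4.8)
The plan is to prove the inequality by strong induction on $n$, splitting the inductive step according to the parity of the index and exploiting the fact that the two recursions in item (2) differ from one another only through the correction term $-1$ recorded in item (4). The base cases $n=2$ and $n=3$ are immediate from item (1), where one even has $u_n=v_n$. So I fix $N\ge 4$, assume $u_k\le v_k$ for all $2\le k<N$, and treat the two parities separately; note that every $N\ge 4$ is either $2n$ with $n\ge 2$ or $2n+1$ with $n\ge 2$, so the base cases $n=2,3$ cover everything else.

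For $N=2n$ even with $n\ge 2$, the indices $n$ and $n+1$ both lie in $\{2,\dots,N-1\}$ (here $n+1<2n$ exactly because $n\ge 2$), so the induction hypothesis applies and the two recursions give at once
$$u_{2n}\le u_{n}+u_{n+1}\le v_{n}+v_{n+1}=v_{2n}.$$
The delicate case is $N=2n+1$ odd, and this is where the main obstacle sits. For an odd index there is no recursion available for $(u_n)$: the only control is monotonicity, $u_{2n+1}\le u_{2n+2}$, combined with the even recursion applied to $2n+2=2(n+1)$, namely $u_{2n+2}\le u_{n+1}+u_{n+2}$. Feeding in the induction hypothesis at $n+1$ and $n+2$ (both $<N$ since $n\ge 2$) yields only $u_{2n+1}\le v_{n+1}+v_{n+2}=v_{2n+2}$, whereas the target is $v_{2n+1}=v_{2n+2}-1$. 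The naive chain is therefore off by exactly one unit.

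The resolution — and the point I would stress as the crux — is that the sequences are integer-valued and, by item (3), \emph{strictly} increasing, so monotonicity sharpens to $u_{2n+1}\le u_{2n+2}-1$; this supplies precisely the missing unit:
$$u_{2n+1}\le u_{2n+2}-1\le (u_{n+1}+u_{n+2})-1\le (v_{n+1}+v_{n+2})-1=v_{2n+2}-1=v_{2n+1}.$$
I would emphasize that integrality is genuinely needed rather than a mere convenience: for real-valued strictly increasing sequences one can pick $u_{2n+1}$ strictly between $u_{2n+2}-\eps$ and $u_{2n+2}$ while keeping it above $v_{2n+1}$, so the statement would simply be false. Thus the whole argument hinges on matching the $-1$ of item (4) against the unit gap forced by strict monotonicity of an integer sequence; once this matching is recognized, both parity cases close and the induction is complete.
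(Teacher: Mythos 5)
Your proof is correct and follows essentially the same route as the paper's: strong induction with a parity split, the even case chaining the two recursions through the induction hypothesis, and the odd case passing through $u_{2n+1}\le u_{2n+2}-1$ to absorb the $-1$ from item (4). You are in fact more careful than the paper at the crux: the paper asserts $u_{n+1}\le u_{n+2}-1$ without comment, whereas you correctly point out that this step needs the sequences to be integer-valued and strictly increasing --- a hypothesis the paper leaves implicit (it holds in the intended application, $u_{n}=\kappa_{n}$ being cardinalities and strictly increasing by Proposition \ref{prop-kappastrictmono}).
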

\begin{proof}
The proof is done by induction. Inequality holds for $n=2$ and $n=3$. Let us assume it holds for every $k\le n$ and let us prove it also holds for every $k\le n+1$.

\medskip
$\bullet$ If $n+1$ is even, say $n+1=2k$, then $k\ge 2$. Hence
$$k=n+1-k\le n-1.$$
This yields
$$u_{n+1}=u_{2k}\le u_{k}+u_{k+1}\le v_{k}+v_{k+1}=v_{2k}=v_{n+1},$$
where the last inequality uses the induction hypothesis.

\medskip
$\bullet$ If $n+1$ is odd then $n+2$ is even, say $n+2=2k$. In that case $n+2\ge 6$ which yields $k\ge 3$. Hence
$$k=n+2-k\le n-1.$$
We can thus apply he induction hypothesis to get
$$u_{n+1}\le u_{n+2}-1=u_{2k}\le u_{k}+u_{k+1}-1\le v_{k}+v_{k+1}-1=v_{2k}-1=v_{n+2}-1=v_{n+1}.$$
\end{proof}

\begin{lemma}
\label{lem-v4nrecu}
The sequence $v_{n}$ satisfies for every $n\ge 2$,
$$\forall n\ge 2,\begin{cases}
v_{4n}=v_{2n}+v_{2n+2}-1,\\
\quad v_{4n+2}=2v_{2n+2}-1.
\end{cases}
$$
\end{lemma}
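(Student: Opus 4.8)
The plan is to derive both identities directly from the defining recursion $v_{2n}=v_n+v_{n+1}$ together with the ``odd gap'' condition (4), namely $v_{2n+1}=v_{2n+2}-1$ for all $n\ge 2$. The strategy is to expand $v_{4n}$ and $v_{4n+2}$ by applying the doubling recursion once, thereby reducing indices from multiples of $4$ down to multiples of $2$, and then to absorb the resulting odd-index terms using condition (4).

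For the first identity, I would start from $v_{4n}=v_{2\cdot(2n)}$ and apply recursion (2) with $n$ replaced by $2n$, obtaining
\begin{equation*}
v_{4n}=v_{2n}+v_{2n+1}.
\end{equation*}
The term $v_{2n+1}$ is an odd-index value, so I invoke condition (4) in the form $v_{2n+1}=v_{2n+2}-1$ (valid since $n\ge 2$), which yields $v_{4n}=v_{2n}+v_{2n+2}-1$, exactly the claimed formula.

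For the second identity, I would write $v_{4n+2}=v_{2\cdot(2n+1)}$ and apply recursion (2) with $n$ replaced by $2n+1$ (which is $\ge 5\ge 2$, so the recursion applies), giving
\begin{equation*}
v_{4n+2}=v_{2n+1}+v_{2n+2}.
\end{equation*}
Again I replace the odd-index term $v_{2n+1}$ by $v_{2n+2}-1$ using condition (4), obtaining $v_{4n+2}=(v_{2n+2}-1)+v_{2n+2}=2v_{2n+2}-1$, as desired.

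There is no genuine obstacle here: both identities are immediate consequences of a single application of the doubling recursion followed by the substitution from condition (4). The only point requiring a small amount of care is verifying that the index shifts land in the valid range for each hypothesis — in particular that $2n\ge 2$ and $2n+1\ge 2$ so that recursion (2) applies, and that condition (4) may indeed be invoked at index $n$ (for $v_{2n+1}$) which holds for all $n\ge 2$. Since all of these are satisfied whenever $n\ge 2$, the lemma follows.
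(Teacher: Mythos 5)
Your proof is correct and follows exactly the paper's own argument: apply the doubling recursion $v_{2m}=v_{m}+v_{m+1}$ with $m=2n$ (resp.\ $m=2n+1$) and then substitute $v_{2n+1}=v_{2n+2}-1$ from condition (4). The paper's proof is just a terser version of the same two-line computation, so there is nothing to add.
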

\begin{proof}
For $n\ge 2$,  $v_{4n}=v_{2n}+v_{2n+1}$ and $v_{2n+1}=v_{2n+2}-1$.
For $n\ge 2$, $v_{4n+2}=v_{2n+1}+v_{2n+2}$ and $v_{2n+1}=v_{2n+2}-1$.
\end{proof}

\begin{lemma}
\label{lem-vnquasiarithm}
For every $n\ge1$, $v_{2n+2}-v_{2n}\in\{\be,\be+\al-1\}$.
\end{lemma}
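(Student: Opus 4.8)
The plan is to pass to the difference sequence $d_{n}:=v_{2n+2}-v_{2n}$ and to extract from Lemma \ref{lem-v4nrecu} a self-similar recursion that halves the index, reducing the statement to a few base values plus a strong induction. First I would record the base values: from $v_{2n}=v_{n}+v_{n+1}$ one gets $v_{4}=\al+\be$ and $v_{6}=\al+2\be$, while $v_{2n+1}=v_{2n+2}-1$ gives $v_{5}=v_{6}-1=\al+2\be-1$ and hence $v_{8}=v_{4}+v_{5}=2\al+3\be-1$. Therefore
$$d_{1}=v_{4}-v_{2}=\be,\qquad d_{2}=v_{6}-v_{4}=\be,\qquad d_{3}=v_{8}-v_{6}=\be+\al-1,$$
all lying in $\{\be,\be+\al-1\}$.

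The core step is to read two reduction formulas off Lemma \ref{lem-v4nrecu}. For $n\ge2$, subtracting its two identities gives
$$d_{2n}=v_{4n+2}-v_{4n}=(2v_{2n+2}-1)-(v_{2n}+v_{2n+2}-1)=d_{n},$$
while applying the first identity at $n+1$ and the second at $n$ gives
$$d_{2n+1}=v_{4n+4}-v_{4n+2}=(v_{2n+2}+v_{2n+4}-1)-(2v_{2n+2}-1)=d_{n+1}.$$
Thus for every $n\ge2$ one has $d_{2n}=d_{n}$ and $d_{2n+1}=d_{n+1}$, so each branch sends an index into $\{\be,\be+\al-1\}$ as soon as the halved index is already there.

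I would finish by strong induction on $m$, proving $d_{m}\in\{\be,\be+\al-1\}$ for all $m\ge1$, with $m=1,2,3$ handled by the base values. For even $m=2k\ge4$ we have $2\le k<m$ and $d_{m}=d_{k}$; for odd $m=2k+1\ge5$ we have $k\ge2$, so $1\le k+1<m$ and $d_{m}=d_{k+1}$; in both cases the induction hypothesis concludes. The one point demanding care — and the reason $m=1,2,3$ must be checked by hand rather than produced by the recursion — is that Lemma \ref{lem-v4nrecu}, and hence both reduction formulas, requires $n\ge2$; the conditions $k\ge2$ above are exactly what keeps every application inside this range. Everything else is bookkeeping, the recursion strictly decreasing the index while preserving the two-element target set.
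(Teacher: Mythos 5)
Your proof is correct and follows essentially the same strategy as the paper's: a strong induction in which the difference $d_{m}=v_{2m+2}-v_{2m}$ is reduced, according to the parity of the index, to a difference at roughly half the index. The only notable difference is packaging — you channel Lemma \ref{lem-v4nrecu} into the explicit recursions $d_{2n}=d_{n}$ and $d_{2n+1}=d_{n+1}$ (the paper instead re-expands via $v_{2n}=v_{n}+v_{n+1}$ and $v_{2n+1}=v_{2n+2}-1$ directly, arriving at the same reduction), which lets you get away with the three base cases $d_{1},d_{2},d_{3}$ where the paper checks values up to $v_{22}-v_{20}$.
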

\begin{proof}
A simple computation shows that this holds up to $v_{22}-v_{20}$. We thus do the proof by induction.
Assume $n\ge 4$ is such that for any $p\le n$, $v_{2p+2}-v_{2p}$ is either equal to $\be$ or $\be+\al-1$.
Hence we have
$$v_{2n+4}-v_{2n+2}=v_{n+2}+v_{n+3}-v_{n+1}-v_{n+2}=v_{n+3}-v_{n+1}.$$
If $n$ is even, then $5\le n+1\le n+3$ are odd and equalities $v_{n+3}=v_{n+4}-1$  $v_{n+1}=v_{n+2}-1$ hold. Hence we get
$$v_{2n+4}-v_{2n+2}=v_{n+4}-v_{n+2},$$
with $n+2\le 2n$. Hence induction hypothesis applies and $v_{2(n+1)+2}-v_{2(n+1)}$ is either equal to $\be$ or to $\be+\al-1$.

\medskip
If $n$ is odd, $5\le n+1\le n+3$ are even and $n+1\le 2n$ since $n\ge 4$. This also yields that $v_{2(n+1)+2}-v_{2(n+1)}$ is either equal to $\be$ or to $\be+\al-1$.
\end{proof}

Lemma \ref{lem-vnquasiarithm} yields that for every $n$, $v_{2n}\le (\be+\al-1)(n-1)+\al$.

\subsubsection{Bound from above for $\kappa_{n}$}

We remind that any even line in $\GJ$ is a concatenation of $0010$ or $0000$. Any odd line is a concatenation of $10$. This yields that  only the configurations \tritree{0}{0}{0}, \tritree{1}{0}{0} , \tritree{0}{1}{0} , \tritree{1}{1}{0}  appear in $\GJ$. Hence $\kappa_{2}=4$. Set $\al=4$ and $\be=\kappa_{3}$.
Let us consider the sequences $(u_{n})=(\kappa_{n})$ and $(v_{n})$ as above.

Lemma \ref{lem-unvn} yields  for all $n\ge 2$
$$\kappa_{2n}\le (\be+3)(n-1)+4, \kappa_{2n+1}\le \kappa_{2n+2}-1=(\be+3)n+3.$$

\section{Bufetov entropy} \label{secbufetov-entropy}

First, we recall Bufetov's definition of entropy in our settings.


We write $\psi \leq \omega$ when
there exists some $\eta$ such that $ \omega = \eta \psi$ and define the dynamical
distance $d_{\omega}$ on $X$ by
$$
   d_{\omega}(\GA, \GB) = \max_{\psi \leq \omega}{ d(T_{\psi}(\GA), T_{\psi}(\GB))  }.
$$
For $\epsilon>0$, an $(\omega, \epsilon, T_a, T_b)-$separated set is a set $K \subset X$ such that
for any pair $\GA, \GB \in K$, with $ \GA\neq \GB$, $d_{\omega}(\GA, \GB) \geq \epsilon$. The maximal cardinality of an
$(\omega, \epsilon, T_a, T_b)-$separated set is then denoted by $N(\omega, \epsilon, T_a, T_b)$.

Now take
$$
  N(n, \epsilon, T_a, T_b) = \frac{1}{2^n} \sum_{|\omega|=n} N(\omega, \epsilon, T_a, T_b)
$$

The Bufetov entropy of the action is defined as
$$
 h_B(T) = \lim_{\epsilon\to0}\limsup_{\ninf}\frac1n\log(N(n,\epsilon, T_a, T_b)).
$$

Setting $Y:= \{a, b\}^{\NN} \times X $, 
$$
  F: (\omega, \GA) \in  \{a, b\}^{\NN} \times X \mapsto (\sigma(\omega), T_{\omega_0}(\GA))
$$
where $\sigma$ is the usual shift, and $$
 d_{Y}((\om,\GA);(\om',\GA'))=\max(d_{\S}(\om,\om');d(\GA,\GA')), 
$$
Bufetov proved in \cite{bufetov} equality 
$$h_{top}(F)=h_{B}(T)+\log2.$$

%
%
%

\subsection{Proof of Theorem \ref{main-Fzeroentro}}

By definition (see \cite{walters}),
$$h_{top}(F):=\lim_{\epsilon\to0}\limsup_{\ninf}\frac1n\log(r(n,\epsilon)),$$
where $r(n,\epsilon)$ is the maximal cardinality for a $(n,\epsilon)$-separated set of points (for the metric $d_{Y}$).

\bigskip
Note that $\max(c,c')\le\epsilon$ is equivalent to $c\le \epsilon$ and $c'\le \epsilon$.
Pick $\epsilon:=2^{-p}$.

We remind that $k_{n}$ denotes the cardinality of the set $K_{n}$ of patches of length $n$. Because $(X,\FFF)$ is expanding,
$k_{n+p}=\#K_{n+p}$ is the maximal cardinality for an $(n,\epsilon)$-separated set in $X$.
Similarly, $(\S_{2},\s)$ is expanding and $2^{n+p}$ is the maximal cardinality for a $(n,\epsilon)$-separated set in $\S$.
This yields (for sufficiently large $n$)
$$r(n,\epsilon)\le 2^{n+p}k_{n+p}\le 2^{n+p}C(n+p)$$
where we use the bound from above for $\kappa_{n}$. 
This yields $h_{top}(F)\le \log 2$.

On the other hand, for any $\al_{1}\ldots \al_{n}\in \{a,b\}^{n}$ any maximal $(n,\epsilon)$-separated set in $Y$ must  contain a point $(\om,\GA)$ with $\om_{0}\ldots \om_{n-1}=\al_{1}\ldots \al_{n}$. Hence $r(n,\epsilon)\ge 2^{n+p}$. This yields $h_{top}(F)\ge \log 2$.
Hence, $h_{top}(F) = \log{2}$ and $h_B(T) =0$ as claimed.

\end{document}